\documentclass[11pt,a4paper]{article}
\usepackage{a4wide,amsfonts,amsmath,latexsym,amssymb,euscript,graphicx,units,mathrsfs}
\numberwithin{equation}{section}

\usepackage{graphicx}
\usepackage{color}
\usepackage{mathtools}
\usepackage{amssymb}
\usepackage{amssymb}
\usepackage[T1]{fontenc}
\usepackage{latexsym}
\usepackage{xypic}
\usepackage{eufrak}
\usepackage{euscript}
\usepackage{verbatim}
\usepackage{fancyhdr}
\usepackage[english]{babel}

\usepackage{units}

\newtheorem{prop}{Proposition}[section]

\newtheorem{lemme}[prop]{Lemma}
\newtheorem{rem}[prop]{Remark}

\newtheorem{theorem}[prop]{Theorem}

\newcommand{\E}{\mathbb{E}}

\renewcommand{\geq}{\geqslant}
\def\leq{\leqslant}

\def\var{{\mathbb{Var}}}

\def\1{{\mathbf{1}}}

\def\1{{\mathbf{1}}}
\def\0.5{{\frac{1}{2}}}
\def\var{{\rm{Var}}}

\begin{document}

 \pagestyle{plain}

\title{\textbf{On the Besov-Orlicz path regularity  of  some Gaussian processes}}   

\author{
  Rachid Belfadli \footnote{Corresponding author. Department of Mathematics, Faculty
of Sciences and Technologies,
Laboratory of Mathematics, Artificial
Intelligence and Sustainable Technologies,Cadi Ayyad University 2390 Marrakesh, Morocco. E-mail: rachid.belfadli@uca.ac.ma  }, \quad  \quad Brahim Boufoussi  \footnote{Department of Mathematics, Faculty of Sciences Semlalia, Cadi Ayyad University, B.P. 2390, Marrakesh, 40000, Morocco. E-mail: boufoussi@uca.ac.ma} \quad and 
  \quad Youssef Ouknine \footnotemark[2]\, \thanks{Mohammed VI Polytechnic University- Rabat Campus Rocade Rabat-Sal\'{e}  11103 Morocco.  E-mail: youssef.ouknine@um6p.ma} \quad   
}
\maketitle

\begin{abstract}  In this paper, we rely on the additive decomposition in law satisfied by a class of stochastic processes, combined with the well-known regulariy properties of fractional Brownian motion, to establish  Besov-Orlicz  regularity of their sample paths. This provides a unified and direct  proof for a broad class of processes, including bifractional Brownian motion  with parameters $H\in (0, 1]$, $ K\in (0, 2)$ such that $HK \in (0, 1)$, subfractional Brownian motion with Hurst parameter $H\in (0, 1)$, and certain  class of self-similar processes. %associated with the stochastic  heat equation.
\end{abstract}
    
\noindent \textbf{Key words:} Fractional Brownian motion,  Gaussian Processes, Self-similar processes,  Sample path properties, Besov-Orlicz spaces.

\noindent \textbf{2010 Mathematics Subject Classification:} Primary 60H15, 60G18; Secondary 60G17; 46E35.

\section{Introduction} 
Besov spaces are general framework  to study the modulus of smoothness for paths of continuous stochastic processes.  There is an extensive body of literature devoted to the study of Besov or Besov-Orlicz path regularity for stochastic processes; see, for instance, the non-exhaustive list of references \cite{NB3, NB, CKR, CO, V}

In  \cite{NB},  Boufoussi and Nachit have improved the H\"{o}lder continuity results for the bifractional Brownian motion (bfBm in short) $\{B^{H, K}_t, t \in [0, 1]\}$ of Hurst parameters $H, K \in (0, 1)$ by showing that  almost all their sample paths belong to the Besov-Orlicz space $\mathcal{B}^{HK}_{\Phi_2, \infty}([0, 1])$, where $\Phi_2$ stands  for the Young function $\Phi_2(x):= e^{x^2}-1$ (see below for precise statement and notations). Their proof method relies on intricate  Ciesielski et al's  isomorphism characterization of the Besov spaces $\mathcal{B}^{HK}_{p, \infty}([0 .1])$ in terms  sequences spaces (see,  \cite{CKR}).

The starting point of the present work is the observation that many Gaussian processes of interest admit a decomposition in law into the sum of a fractional Brownian motion and a smooth auxiliary process. This type of decomposition, first established by Lei and Nualart \cite{LN} for the bfBm and second by Ruiz De Ch\'{a}vez and Tudor \cite{RuizT} for the subfractional Brownian motion (sfBm), provides a powerful structure insight; it allows one to transfer many regularity properties from the fBm component to Gaussian processes with more complex covariance structure in a direct and transparent manner.
 
Motivated by this idea, the main objective of this paper is to develop a unified and direct approach to Besov-Orlicz regularity for a broad class of Gaussian processes. Indeed, instead of relying on Ciesielski et al's  isomorphism characterization, we exploit probabilistic decompositions combined with sharp moment estimates. This in particular leads to proofs that are more simpler.  We show that the Besov-Orlicz  path regularity of several processes including bfBm, sfBm and certain class of self-similar Gaussian processes can be deduced  from the well-known one of fBm and the additive decomposition in law of the process under consideration. 
 In particular, for the bfBm we state a more general formulation that holds for all parameters $H \in (0, 1)$  and $K\in (0, 2)$ such that $ HK \in (0, 1)$. From this perspective, the present work can be viewed as a natural complement to  \cite{NB}. 
 
 The paper is organized as follows. In Section $2$, we recall some preliminaries on  Besov and Besov-Orlicz spaces.  We then present  a variety of stochastic processes that can be still split  in law into the sum of a fBm and Lei-Nualart's process; namely, the bfBm, sfBm and a class of self-similar Gaussian processes. Section $3$ is devoted to the statement of the main results and their proofs.
\section{Preliminaries} \label{sec2}
Throughout this paper we assume that all the processes considered are  defined in a complete probability space $(\Omega, \mathcal{F}, \mathbb{P})$.

Let $I=(0, 1)$ and for $h\in \mathbb{R}$, we denote $I(h):=\{s\in I: s+h \in I\}$. For  $\alpha \in (0, 1)$ and $p, q \in [0, +\infty)$, we will recall briefly in this subsection the definition of Besov, Exponential Orlicz and Besov-Orlicz spaces that are with our concerns, with the usual modification for $p=+\infty$ or $q=+\infty$. A detailed treatment of this topic can be found in Triebel  \cite{Tr}. 
\subsection{Besov space}\label{BesovSpace} 
The Besov spaces  $\mathcal{B}^{\alpha}_{p, q}(I)$, with $\alpha \in (0, 1)$, are general framework  to study the modulus of smoothness in $L^p$-norms for paths of continuous stochastic processes. More precisly, the  Besov space $\mathcal{B}^{\alpha}_{p, q}(I)$ is defined as the space of functions $f\in L^p(I)$ for which the seminorm
\begin{eqnarray}\label{seminorm1}
(f)_{\mathcal{B}^{\alpha}_{p, q}(I)}:= \left( \int_{0}^{1}( t^{-\alpha}\omega_p(f, t))^q \dfrac{dt}{t}\right)^{1/q}
\end{eqnarray}
is finite, where 
\[\omega_p(f, t):=\sup_{|h|\leq t}\left(\int_{I(h)}|f(s+h)-f(s)|^p ds \right)^{1/p},%\;\;\; \; \mbox{for}\,\, 0<t\leq 1.
\]
The Besov spaces $\mathcal{B}^{\alpha}_{p, q}(I)$ are  Banach spaces when endowed with the sum of $L^p$-norm and this seminorm. It can be shown  (see, for instance \cite{CO}) that the above seminorm is equivalent to 
\[ ||f||_{p, q, \alpha}:= \left( \sum_{n \geq 0} 2^{n q \alpha} ||f(\cdot +2^{-n}) - f(\cdot)||^{q}_{L^p(I(2^{-n}))} \right)^{1/q}.
\]
It is then more convenient to take the norm
\begin{eqnarray}\label{norm_1}
||f||_{p, q, \alpha}:=||f||_{L^p(I)} +  ||f||_{p, q, \alpha}
\end{eqnarray}
as a Banach space norm on  $\mathcal{B}^{\alpha}_{p, q}(I)$. The particular case when $q=+\infty$ corresponds to the Besov space $\mathcal{B}^{\alpha}_{p, \infty}(I)$ endowed with the norm
\begin{eqnarray}\label{norm_2} || f||_{p, \infty, \alpha}:= ||f||_{L^{p}(I)} + \sup_{n\geq 0} 2^{n \alpha}||f(\cdot +2^{-n}) - f(\cdot)||_{L^p(I(2^{-n}))}. \end{eqnarray}

\subsection{Exponential Orlicz and Besov-Orlicz spaces}\label{O-B-Spaces} 
Let us consider the Young function $\Phi_{\beta}(x):= e^{x^{\beta}}-1$, for $\beta >0$. The exponential Orlicz space is defined as the space of all measurable functions $f \in L^{0}(I)$ for which the norm
\[ ||f ||_{L^{\Phi_{\beta}}(I)}:= \sup_{p\geq 1} p^{-1/\beta} ||f ||_{L^{p}(I)}
\]
is finite.  The Besov-Orlicz space $\mathcal{B}^{\alpha}_{\Phi_{\beta}, \infty}(I)$  is defined as the space 
\[ \mathcal{B}^{\alpha}_{\Phi_{\beta}, \infty}:= \{ f\in L^{0}(I): (f)_{\alpha, \beta}:= \sup_{0<t\leq 1} (t^{-\alpha} \tilde{\omega}_p(f, t))<\infty\},
\]
where $\tilde{\omega}_p(f, t):=\sup_{|h|\leq t} ||f(\cdot + h)- f(\cdot)||_{L^{\Phi_{\beta}}(I)}$.  The space $\mathcal{B}^{\alpha}_{\Phi_{\beta}, \infty}(I)$  is a Banach space when endowed with the sum of the $L^{\Phi_{\beta}}(I)$-norm and the seminorm $(\cdot)_{\alpha, \beta}$. The seminorm $(\cdot)_{\alpha, \beta}$ is equivalent to 
\[
||| f|||_{\alpha, \beta}:= \sup_{n \geq 1} 2^{n \alpha} ||f(\cdot + 2^{-n}) -f( \cdot)||_{L^{\Phi_{\beta}}(I(2^{-n}))}.
\]
For the purpose below it is more convenient to choose the norm
\begin{eqnarray}\label{norm-OB_2}
||f||_{\mathcal{B}^{\alpha}_{\Phi_{\beta}, \infty}}:=||f ||_{L^{\Phi_{\beta}}(I)} + ||| f|||_{\alpha, \beta}
\end{eqnarray}
as a Banach space norm on $\mathcal{B}^{\alpha}_{\Phi_{\beta}, \infty}(I)$.
\begin{rem}\label{remk} Notice that from the definitions of the above spaces the following embedding holds
\[ \mathcal{B}^{\alpha}_{\Phi_{\beta}, \infty} \subseteq \mathcal{B}^{\alpha}_{p, \infty}, \qquad \mbox{for every}\quad p \in [1, \infty).\]
Furthermore,
\begin{eqnarray}\label{BO-embeding}
\mathcal{B}^{\alpha}_{\Phi_{\beta}, \infty} \subseteq \mathcal{B}^{\gamma}_{\Phi_{\beta}, \infty}, \quad \mbox{if}\quad \gamma \leq \alpha. 
\end{eqnarray}
\end{rem}

\subsection{Examples of suitable processes}\label{processes}
In this subsection, we present several basic examples of processes 
that admit a decomposition in law as the sum of a fBm and the Lei-Nualart process, including the sfBm and a class of self-similar Gaussian processes associated with the stochastic heat equation, introduced by Harnett and Nualart \cite{HN}. Various properties of bBm, sfBm and  numerous other self-similar processes can be found in \cite{T13}
\subsubsection{Bifractional Brownian motion}
The bi-fractional Brownian motion is a generalization of the fractional Brownian motion (bfBm), first introduced by  Houdr\'{e} and Villa \cite{HV}. It is  defined as a centred Gaussian process $\{B^{H, K}_t, t\geq 0\}$, with the covariance function 
\begin{eqnarray}\label{cov-bfBm}
R^{H, K}(t, s)=2^{-K}((t^{2H}-s^{2H})^K-|t-s|^{2HK}),\;\;\;\; s, t \geq 0,
\end{eqnarray}
where $H\in(0, 1)$ and $K\in (0, 1]$. Note that, if $K=1$ then  $B^{H}:=B^{H, 1}$ is simply the fBm with Hurst parameter $H$. We refer the reader to \cite{RT} and \cite{LN} for further properties. 

The following decomposition  in law of the bfBm into the sum of a fBm and a regular stochastic process have been proved by Lei and Nualart \cite{LN}
\begin{eqnarray}\label{LNdecompo}
\{\; c_2 B^{HK}_t, t\geq 0 \; \}\overset{d}{=}\{\; B^{H, K}_t+c_1X_t^{H, K}, t\geq 0 \; \},
\end{eqnarray}
where $c_1=\sqrt{ \frac{K2^{-K}}{\Gamma(1-K)}}$, $c_2= 2^{\frac{1-K}{2}}$, $\{B^{HK}, t\geq 0\}$ is a fBm with Hurst parameter $HK$ and the process $X^{H, K}$ is defined, for $0<K<1$, by
\begin{eqnarray}\label{LNprocess1}
X^{H, K}_0=0\,\,\,\, \mbox{and}\,\,\,\,X^{H, K}_t:=X^{K}_{t^{2H}}
\end{eqnarray}
with
\begin{eqnarray}\label{LNprocess}
X^{K}_t:=\int_{0}^{\infty} (1-e^{-\theta t})\theta^{-\frac{1+K}{2}}dW_{\theta},\,\,\,\,\,\mbox{for all}\,\,\,\, t>0,
\end{eqnarray}
and $\{W_{\theta},  \theta \geq 0\}$ is a standard Brownian motion independent of $B^{HK}$. The notation  $X \overset{d}{=} Y$ means that the two processes $X$ and $Y$ have the same distribution.\\
Using  the process $X^{H, K}$, which is well defined for all $0<K<2$, Bardina and Es-Sebaiy \cite{BE}  showed that $R^{H, K}$ still a covariance function for $0<H<1$ and $1<K<2$ such that $0<HK<1$, and hence extending the definition of the bfBm to that region.  Mainly, they showed that if $W$ and $B^{HK}$ are independents then the process $Y^{H, K}$ defined by
\begin{eqnarray}\label{BE-ext}
Y_t^{H, K}:= aB_t^{HK} +bX_t^{H, K},\;\;\;\; t\geq 0
\end{eqnarray}
is a bfBm of parameters  $H$ and $K$, that is, a centered Gaussian process with the covariance function given by (\ref{cov-bfBm}). Here, the real constants $a$ and $b$ are given by $a=\sqrt{2^{1-K}}$ and $b=\sqrt{\frac{K(K-1)}{2^K\Gamma(2-K)}}$.
\begin{rem}
Notice that the exact range of pairs $(H, K)$ for which the function in (\ref{cov-bfBm}) is a covariance function on $\mathbb{R}_{+}^2$ is actually investigated in  Lifshits and Volkova \cite{LV} and Talarczyk \cite{Tal20} .
\end{rem}

\subsection{Subfractional Brownian motion.}  The sub-fractional Brownian motion (sfBm) with  parameter $H\in(0, 1)$, introduced in Bojdecki et al. \cite{Bo-all},  is defined as the mean-zero  Gaussian process $\{S_t^H, t\geq 0\}$, with covariance function given by
\begin{eqnarray}\label{cov-subfBm}
R_{H}(t, s)=s^{2H}+ t^{2H}-\frac{1}{2}\left[(s+t)^{2H} + |s-t|^{2H} \right], \;\;\;\; s, t \geq 0.
\end{eqnarray}
 Note that, when $H=1/2$, $S^{1/2}$ is reduced to the standard Brownian motion. For further discussion and  properties we refer the reader to \cite{Bo-all, RuizT, T},  among other references. For instance, in \cite{RuizT}  the authors obtained the following decomposition in law of the sfBm depending on the value of $H$: %{\color{red} Put $2k+1=2H$}
\begin{itemize}
\item if $H\in (0, 1/2)$, then with the process $X^{K}$ given by (\ref{LNprocess}), we have
\begin{eqnarray}\label{subdecomp1}
\{S^H_t, \;\; t\geq 0 \; \}\overset{d}{=}\{\; B^{H}_t+c_3X_t^{2H}, t\geq 0 \; \},
\end{eqnarray}
where $B^{H}$ is a fBm with Hurst parameter $H$ independent of the standard Brownian motion $W$ and  $c_3= \sqrt{\frac{H(1-2H)}{\Gamma(2-2h)}}$.
\item if $H\in(1/2, 1)$,  then
\begin{eqnarray}\label{RT-ext} 
\{B^{H}_t, \;\; t\geq 0 \; \}\overset{d}{=}\{\; S^H_t+c_4X_t^{2H}, t\geq 0 \; \},
\end{eqnarray}
where $S^{H}$ is a sfBm independent of $W$ with Hurst parameter $H$ and  $c_4=\sqrt{\frac{H(2H-1)}{\Gamma(2-2H)}}$.
\end{itemize}

\subsection{A class of self-similar processes.} Another  example of processes where the analogous  additive decomposition in law occurs is the one introduced by Harnett and Nualart in \cite{HN}. Consider the centred Gaussian process $G=\{G_t, \; t \geq 0\}$ with covariance 
\begin{eqnarray}\label{HN-covariance}
R(t, s):= \mathbb{E}(G_tG_s)=\mathbb{E}\left(\int_0^t Z_{t-r}dB^{H}_r\right)\left(\int_0^s Z_{s-r}dB^{H}_r\right),
\end{eqnarray} 
where $B^H$ is a fBm with Hurst parameter $H\in(0, 1)$ and $Z=\{ Z_t, \; t>0\}$ is a zero mean Gaussian process, independent of $B^H$, with covariance $\mathbb{E}(Z_tZ_s)= (t+s)^{-\gamma}$, where $\gamma \in(0, 2H)$. Notice  here that the process $\{\int_0^t Z_{t-r} dB^{H}_r,\; t\geq 0\}$ is a non Gaussian process having the same covariance function as the process $\{G_t,\; t\geq 0\}$.  

According to Harnett and Nualart \cite{HN} the following two properties are satisfied:
\begin{itemize}
\item[1)]When $H\in (\frac{1}{2}, 1)$, then $G$ has the law  of the solution in time of a stochastic heat equation. Indeed, let $d\geq 1$ be an integer and consider the $d$-dimensional stochastic heat equation
\[\frac{\partial u}{\partial t}=\frac{1}{2}\Delta u \; + \dot {W}^{H}, \; t\geq 0, \; x\in \mathbb{R}^{d}, \]
with zero initial condition, where $\dot{W}^{H}$ is a fractional-colored noise with covariance given by
\[\mathbb{E}(\dot{W}^H(t, x)\dot{W}^H(s, y))=c_{d, \beta} \alpha_H |t-s|^{2H-2}|x-y|^{-\beta},\]
where $0<\beta<d \wedge 2$, $ \alpha_H=H(2H-1)$ and $c_{d, \beta}=\pi^{-d/2}2^{\beta-d}\Gamma(\beta/2)/\Gamma(\frac{d-\beta}{2})$.

Now, with 
$$D=(2\pi)^{-d}(1-\beta/2)^{-1}\int_{\mathbb{R}^d} \frac{e^{-\frac{|\xi|^2}{2}}}{|\xi|^{d-\beta}}d \xi,$$ 
the covariance  function of the process $\{u(t, 0),\; t\geq 0\}$ is given by
\[\mathbb{E}(u(t, 0)u(s, 0))= D\alpha_H \int_0^t\int_0^s |u-v|^{2H-2}(t+s -u-v)^{-\gamma} dudv,\]
which  is exactly, up to a constant,  the covariance of the process $G$ when $H\in(\frac{1}{2}, 1)$.
\item[2)] The process $G$ can be split in law as the sum of a fBm and a Lei-Nualart type process, namely we have
\begin{eqnarray}\label{HNdecompo}
\{\; G_t, t\geq 0 \; \}\overset{d}{=}\{\; \sqrt{\kappa} B^{\alpha/2}_t+\sqrt{\lambda} X_t^{2\alpha +1}, t\geq 0 \; \},
\end{eqnarray}
where $B^{\alpha/2}$ is a fBm with Hurst parameter $\alpha
/2$, $X^{2\alpha +1}$ is the Lei-Nualart process defined by (\ref{LNprocess}) and the constants $\alpha$, $\kappa$ and $\lambda$ are given by
\[\alpha=2H-\gamma,\;\; \kappa=\frac{1}{\Gamma(\gamma)}\int_{0}^{\infty} \frac{z^{\gamma -1}}{1+z^{2}}dz \;\;\;\mbox{and}\;\lambda=\frac{4\pi}{\Gamma(\gamma)\Gamma(2H+1)\sin(\pi H)}\int_{0}^{\infty}\frac{\eta^{1-2H}}{1+\eta^2}d\eta.\]
%\item[3)]
\end{itemize}

\section{Main results}
Since the processes we consider in this paper belong to the class of processes that can be split in law into two part that are belonging each other  to $\mathcal{B}^{\alpha}_{\Phi_2, \infty}(I)$, our results will be easily  deduced from the following simple observation whose proof is straightforward  and whence omitted.

\begin{prop}\label{prop1}Let $X=\{X_t, \; t \in I\}$, $Y=\{Y_t, \; t \in I\}$ and $Z=\{Z_t, \; t \in I\}$ be three stochastic processes such that $Z\overset{d}{=}X+Y$ with $Y \in  \mathcal{B}^{\alpha}_{\Phi_2, \infty}(I)$ for some $\alpha \in (0, 1)$. Then, almost surely all the  paths of the process
$X$ belong to $ \mathcal{B}^{\alpha}_{\Phi_2, \infty}(I)$  if and only if  
almost surely all the  paths of the process $Z$ belong to $\mathcal{B}^{\alpha}_{\Phi_2, \infty}(I)$.
\end{prop}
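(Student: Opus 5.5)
The proof combines a linear-space argument with a transfer through equality in law. Write $\mathcal{B}:=\mathcal{B}^{\alpha}_{\Phi_2, \infty}(I)$. First I use that $\mathcal{B}$ is a vector space: the norm (\ref{norm-OB_2}) is a sum of seminorms, each subadditive. So if $f,g$ are functions on $I$ with $g\in\mathcal{B}$, then $f\in\mathcal{B}$ if and only if $f+g\in\mathcal{B}$. This follows from
\[ \|f+g\|_{\mathcal{B}}\leq \|f\|_{\mathcal{B}}+\|g\|_{\mathcal{B}}, \qquad \|f\|_{\mathcal{B}}\leq \|f+g\|_{\mathcal{B}}+\|g\|_{\mathcal{B}}. \]
Applying this pathwise on the almost sure event $\{Y_\cdot(\omega)\in\mathcal{B}\}$ gives
\[ \{X_\cdot\in\mathcal{B}\}=\{X_\cdot+Y_\cdot\in\mathcal{B}\} \]
up to a null set. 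Hence $\mathbb{P}(X\in\mathcal{B})=1$ if and only if $\mathbb{P}(X+Y\in\mathcal{B})=1$.

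Second, I transfer from $X+Y$ to $Z$ using $Z\overset{d}{=}X+Y$. This requires that $\{f:\ f\in\mathcal{B}\}$ be a measurable set of paths for the sigma-field on which the two laws coincide. Then $\mathbb{P}(Z\in\mathcal{B})=\mathbb{P}(X+Y\in\mathcal{B})$, and the two steps together give the equivalence.

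The main obstacle is this measurability, since equality in law is only a statement about finite-dimensional distributions. I would work in the setting of the paper's applications, where all processes have continuous paths on $\bar I$. Equality of finite-dimensional laws then gives equality of the laws on $C(\bar I)$ with its Borel sigma-field, which is generated by the coordinates. For continuous $f$:
\begin{itemize}
\item The quantity $\|f(\cdot+2^{-n})-f(\cdot)\|_{L^p(I(2^{-n}))}$ is a continuous functional of $f$ for fixed $p$ and $n$, because the integrand is bounded and continuous, so the integral is a limit of Riemann sums.
\item The map $p\mapsto p^{-1/2}\|g\|_{L^p}$ is continuous in $p$ for bounded $g$, so the supremum over $p\geq 1$ in the $L^{\Phi_2}$-norm can be restricted to rational $p$.
\item The supremum over $n$ is already countable.
\end{itemize}
Therefore $f\mapsto\|f\|_{\mathcal{B}}\in[0,\infty]$ is a countable supremum of continuous functionals, hence Borel on $C(\bar I)$, and $\{\|f\|_{\mathcal{B}}<\infty\}$ is a Borel set. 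If one does not want to assume continuity, the same argument works for measurable processes with versions chosen consistently, by approximating the integrals via countable families of finite-dimensional functionals. I would note that the continuous case suffices for all examples in Subsection \ref{processes}.
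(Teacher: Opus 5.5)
Your proof is correct and is the straightforward argument the paper has in mind; the paper omits the proof. The argument is: $\mathcal{B}^{\alpha}_{\Phi_2,\infty}(I)$ is a vector space, so on the full-measure event $\{Y\in\mathcal{B}^{\alpha}_{\Phi_2,\infty}(I)\}$ you have $X\in\mathcal{B}^{\alpha}_{\Phi_2,\infty}(I)$ if and only if $X+Y\in\mathcal{B}^{\alpha}_{\Phi_2,\infty}(I)$, and $Z\overset{d}{=}X+Y$ then transfers this to $Z$.

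Two points in your proposal go beyond the paper and are worth keeping. First, you check that $\{f:\|f\|_{\mathcal{B}^{\alpha}_{\Phi_2,\infty}}<\infty\}$ is Borel in $C(\bar I)$, as a countable supremum over $n$ and rational $p$ of continuous functionals. Second, you restrict to continuous (or at least jointly measurable) versions. That restriction is genuinely needed: equality in law only controls finite-dimensional distributions, and for arbitrary non-measurable versions the statement can fail. It costs nothing for the processes the paper applies the proposition to.
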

\begin{rem}\label{rem2}
All the examples of processes  considered in Section~\ref{sec2} fall within the scope of Proposition~\ref{prop1}. Therefore, it suffices to verify that almost surely the paths of $X^{H, K}$ belong to $\mathcal{B}^{HK}_{\Phi_2, \infty}(I)$.
\end{rem}

Our first main result can be stated as follows.

\begin{theorem}
\label{Lei-NualartProcess-result2} 
Let  $H\in (0, 1)$ and $K\in (0, 2]$ such that $0<HK<1$. Then,  the paths of  $X^{H, K}$ belong almost surely to the Orlicz-Besov space $\mathcal{B}^{HK}_{\Phi_2, \infty}(I)$.
\end{theorem}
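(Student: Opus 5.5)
I will prove the statement by the standard route for Gaussian processes: bound the $L^p$ moments of the Besov--Orlicz quantities with the correct growth in $p$, then sum over dyadic levels using a Borel--Cantelli / Chebyshev argument.

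\textbf{Step 1: second moments of the increments.} The key input is a variance estimate for the increments of $X^{H,K}$. By the It\^o isometry,
\[
\mathbb{E}|X^{H,K}_{t}-X^{H,K}_{s}|^2=\int_0^\infty \big(e^{-\theta s^{2H}}-e^{-\theta t^{2H}}\big)^2\theta^{-1-K}\,d\theta .
\]
The substitution $\theta\mapsto\theta/|t^{2H}-s^{2H}|$ shows that this is at most $C\,|t^{2H}-s^{2H}|^{K}$. I will use $|e^{-a}-e^{-b}|\le e^{-\min(a,b)}\min(1,|a-b|)$; since $K\in(0,2)$, $\int_0^\infty \min(1,\theta^2)\theta^{-1-K}\,d\theta<\infty$. (For $K=2$ the process degenerates, and I would treat that endpoint separately or exclude it.)

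A bound by $|t-s|^{2HK}$ alone fails near $0$ when $2H>1$. What I actually need is the $L^p(I(h))$ integral of the increments, so a pointwise bound weighted in $s$ suffices. I will use two estimates. For $s\ge h$, the mean value theorem gives $|t^{2H}-s^{2H}|\le C h\, s^{2H-1}$ when $H\ge1/2$; when $H<1/2$ I will instead use the subadditivity bound $|t^{2H}-s^{2H}|\le h^{2H}$. Alternatively, for $s\ge h$ I will exploit the factor $e^{-\theta s^{2H}}$ to obtain the refined bound
\[
\sigma^2(s,h):=\mathbb{E}|X^{H,K}_{s+h}-X^{H,K}_s|^2\le C\,\min\big(h^{2HK},\,h^2 s^{2HK-2}\big).
\]
To check this, the variance is $\int (1-e^{-\theta\Delta})^2 e^{-2\theta s^{2H}}\theta^{-1-K}\,d\theta$ with $\Delta=(s+h)^{2H}-s^{2H}\approx h s^{2H-1}$. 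Using $(1-e^{-x})^2\le x^2$, this is at most $\Delta^2 s^{2H(K-2)}$, which gives $h^2 s^{2HK-2}$ up to constants.

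\textbf{Step 2: integrated moments.} By Gaussianity, $\mathbb{E}|\xi|^p\le (Cp)^{p/2}\sigma^p$. Hence
\[
\mathbb{E}\int_{I(h)}|\Delta_h X|^p\,ds\le (Cp)^{p/2}\int_0^1 \sigma(s,h)^p\,ds .
\]
I split the integral at $s=h$. The part $s<h$ contributes $h\cdot h^{pHK}$. The part $s>h$ contributes $h^p\int_h^1 s^{p(HK-1)}\,ds\le C\,h^{pHK+1}$, because $HK<1$ and $p$ is large. Altogether this yields $\le (Cp)^{p/2}h^{pHK}$, uniformly in $p\ge p_0$.

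\textbf{Step 3: summing to get the Orlicz norm.} It remains to control $\sup_n 2^{nHK}\sup_p p^{-1/2}\|\Delta_{2^{-n}}X\|_{L^p}$, and the supremum over $p$ is the main obstacle. I will handle it by discretising $p$ along $p=2^k$, or simply $p\in\mathbb{N}$; this is legitimate because $\|\cdot\|_{L^p}$ is monotone in $p$, so $\sup_{p\ge1}$ is comparable to $\sup_{k}$ over $p=2^k$ up to constants. Then, by Chebyshev,
\[
\mathbb{P}\big(2^{nHK}p^{-1/2}\|\Delta_{2^{-n}}X\|_{L^p}>M\big)\le (C/M)^{p}
\]
for $M$ large. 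The sum over $n$ is not summable as it stands. To fix this I will use Chebyshev with exponent $q=p+n$, or apply the bound with $p$ replaced by $\max(p,n)$ via Jensen, giving $\|\cdot\|_{L^p}\le\|\cdot\|_{L^{p\vee n}}$ and hence a factor $\sqrt{(p\vee n)/p}$. A cleaner alternative is the standard Ciesielski-type estimate via $\mathbb{E}\exp\big(c\,|\Delta X|^2/\sigma^2\big)$, applied to the functional $\int\Phi_2\big(|\Delta_h X(s)|/(\lambda h^{HK})\big)\,ds$. Its expectation is bounded uniformly in $h$ by Step 1, since the ratio $\sigma(s,h)/h^{HK}$ is bounded. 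The Orlicz norm is then at most $\lambda$ on the event where this integral is $\le1$, and the $\sup_n$ is controlled through a Borel--Cantelli argument with $\lambda=\lambda_n$ growing slowly, followed by a union bound.

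Finally, the $L^{\Phi_2}(I)$ norm of $X^{H,K}$ itself is finite because the paths are continuous on $[0,1]$, which is a standard fact. The lower-order terms are handled the same way, by the same computation as in Step 1.
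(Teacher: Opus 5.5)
Your Steps~1 and~2 are correct and coincide with the paper's Lemma~\ref{lem:1} and its integration over $[0,1-h]$: $\sigma^2(s,h)\le Ch^{2HK}$ for $s\le h$ and $\sigma^2(s,h)\le Ch^2s^{2HK-2}$ for $s\ge h$. Treating $\|X^{H,K}\|_{L^{\Phi_2}(I)}$ through continuity and boundedness of the paths is also fine, and simpler than the paper's Borel--Cantelli argument (continuity follows from your Step~1 bound by Kolmogorov's criterion).

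\textbf{The gap is in Step~3, and it originates at the end of Step~2.} You prove $\mathbb{E}\|\Delta_hX\|^p_{L^p(I(h))}\le(Cp)^{p/2}h^{pHK+1}$ for $p\ge p_0$, but then record only $(Cp)^{p/2}h^{pHK}$. After normalising by $2^{nHK}$ this is a bound uniform in $n$. Such bounds cannot by themselves give $\sup_n<\infty$: i.i.d.\ $|N(0,1)|$ variables satisfy them and are a.s.\ unbounded. None of your three remedies overcomes this.
\begin{itemize}
\item Passing to $L^{p\vee n}$ costs the factor $\sqrt{(p\vee n)/p}$. So for $p=1$ you only get $2^{nHK}\|\Delta_{2^{-n}}X\|_{L^1}=O(\sqrt n)$.
\item Chebyshev at order $q=p+n$ bounds $\mathbb{P}\big(2^{nHK}p^{-1/2}\|\Delta_{2^{-n}}X\|_{L^p}>M\big)$ by $(C/M)^q(q/p)^{q/2}$. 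For $p=1$ these terms do not even tend to $0$.
\item The $\Phi_2$-functional, used only with boundedness of $\sigma(s,h)/h^{HK}$, gives $\mathbb{P}\big(\|\Delta_{2^{-n}}X\|_{L^{\Phi_2}}>\lambda2^{-nHK}\big)\le C\lambda^{-2}$. This is not summable for fixed $\lambda$, and letting $\lambda_n\to\infty$ only proves $2^{nHK}\|\Delta_{2^{-n}}X\|_{L^{\Phi_2}}=O(\lambda_n)$, not boundedness.
\end{itemize}

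\textbf{The missing idea is to keep the factor $h$.} Since $X^{H,K}$ is smooth away from $0$, increments of size $h^{HK}$ occur only on a set of measure of order $h$. The paper sets $Y_{n,p}:=2^{nHK}\|\Delta_{2^{-n}}X^{H,K}\|_{L^p(I(2^{-n}))}$ and proves $\mathbb{E}(Y_{n,p}^p)\le(c_pC_{H,K})^p\eta_{n,p}$. With $\lambda=1-HK$, one has $\eta_{n,p}=\frac{\lambda p}{\lambda p-1}2^{-n}$ if $\lambda p>1$, and $\eta_{n,p}\le 2^{-n}+n\,2^{-\lambda n}\log 2$ if $\lambda p\le 1$. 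Hence $\sup_p\sum_n\eta_{n,p}<\infty$. Chebyshev at order $p$ then gives $\mathbb{P}(Y_{n,p}>R\sqrt p)\le(cC_{H,K}/R)^p\eta_{n,p}$, which is summable over all $(n,p)$. Borel--Cantelli, the a.s.\ finiteness of each $Y_{n,p}$, and your monotonicity argument for non-integer $p$ finish the proof.

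Your Luxemburg route can be repaired the same way, with a \emph{fixed} $\lambda$. Use $\int_0^1\sigma^2(s,h)\,ds\le Ch^{2HK}\varepsilon(h)$, where $\varepsilon(h)$ is $h$, $h\log(1/h)$ or $h^{2(1-HK)}$ according as $HK<1/2$, $HK=1/2$ or $HK>1/2$. This is summable along $h=2^{-n}$.

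Finally, your caution at $K=2$ is justified: there $\int_0^\infty(1-e^{-\theta t})^2\theta^{-3}\,d\theta=\infty$, so $X^{H,2}$ is not defined. The paper's proof also uses $K<2$.
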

\textit{Proof:} We have to show that $||X^{H, K}_{\cdot}||_{\mathcal{B}^{\alpha}_{\Phi_{\beta}, \infty}}< \infty$  a.s. That is \\
\begin{eqnarray} \label{reg:bo}
 \sup_{p\geq 1}\dfrac{1}{\sqrt{p} }||X^{H, K}||_{L^p(I)} <\infty \quad \, \, \mbox{ and}\,\, \quad \sup_{p\geq 1}\dfrac{1}{\sqrt{p} } \sup_{n \in \mathbb{N}^{\star}}Y_{n, p}<\infty, \, \, \mbox{ a.s.,} 
\end{eqnarray}
where $Y^{p}_{n, p}$ is given by
\begin{eqnarray}\label{the process-: Yn,p}
Y^{p}_{n, p}:= 2^{nHKp} || X^{H, K}_{\cdot +2^{-n}}-X^{H, K}_{\cdot} ||^p_{L^p(I(2^{-n}))} =2^{nHKp}\displaystyle \int_{0}^{1-2^{-n}} \left | \int_{t^{2H}}^{(t+2^{-n})^{2H}}Y_u du\right|^{p}dt.
\end{eqnarray}
We start by proving the first statement. First, notice that for fixed $t>0$, we have
\begin{align}\label{moment p:1}
\E(|X_t^{H, K}|^p)=\E\left(\left| \int_{0}^{\infty}\frac{1-e^{-\theta t^{2H}}}{\theta^{\frac{1+K}{2}}} dW_{\theta}\right|^{p}\right) =c_p^p C^{p/2}_Kt^{HKp},
\end{align}
with
$c_p^p:=\E(|W_1|^p)= \dfrac{2^{p/2}\Gamma(\frac{p+1}{2})}{\sqrt{\pi}}$
and%\footnote{Indeed, by using Fubini's theorem we have 
%\begin{eqnarray*}
%C_K&=& \displaystyle\int_{0}^{\infty} \theta^{1-K}\displaystyle\int_{0}^{1}\displaystyle\int_{0}^{1} e^{-(x+y)\theta} dx dy d \theta=\displaystyle\int_{0}^{1}\displaystyle\int_{0}^{1} e^{-(x+y)\theta} dx dy\left(\displaystyle\int_{0}^{\infty} \theta^{1-K} e^{-(x+y)\theta}d \theta\right)\\
%&=& \Gamma(2-K)\int_{0}^{1}\int_{0}^{1} (x+y)^{K-2} dx dy.
%\end{eqnarray*}
%}
\begin{eqnarray*}
C_K&:=&\displaystyle\int_{0}^{\infty}\frac{(1-e^{-\theta})^2}{\theta^{1+K}}d\theta= \displaystyle\int_{0}^{\infty} \theta^{1-K}\displaystyle\int_{0}^{1}\displaystyle\int_{0}^{1} e^{-(x+y)\theta} dx dy d \theta\\
&=&\displaystyle\int_{0}^{1}\displaystyle\int_{0}^{1} e^{-(x+y)\theta} dx dy\left(\displaystyle\int_{0}^{\infty} \theta^{1-K} e^{-(x+y)\theta}d \theta\right)\\
&=&\Gamma(2-K)\int_{0}^{1}\int_{0}^{1} (x+y)^{K-2} dx dy.
\end{eqnarray*}
That is
 \begin{eqnarray}
C_K:=\displaystyle\int_{0}^{\infty}\frac{(1-e^{-\theta})^2}{\theta^{1+K}}d\theta= \Gamma(2-K)\times \begin{dcases}
	2 \log(2),&\quad  \mbox{if}\quad \,  K=1; \\
	\dfrac{2^K-2}{K(K-1)},&   \mbox{if}\quad \,   K\neq 1.
	\end{dcases}  
\end{eqnarray}
Then
\[\dfrac{1}{\sqrt{p} } \int_{0}^{1} \E(|X^{H, K}_t|^p) dt= \dfrac{c_p^p C_K^{p/2}}{\sqrt{p}(1+ HKp)}.\]
Applying Markov's inequality we obtain,  for all $\lambda >0$,
\begin{eqnarray*}\label{}
\mathbb{P}(||X^{H, K} ||_{L^p(I)} > \lambda c_p)&\leq& \dfrac{c_{p}^{-2p}}{\lambda^{2p}}\E\left( ||X^{H, K} ||^{2p}_{L^p(I)}\right)\\
&\leq &\dfrac{c^{-2p}}{\lambda^{2p}}\int_{0}^{1} \E(|X^{H, K}_t|^{2p}) dt \leq  c_p^{-2p}c_{2p}^{2p}\left(\dfrac{\sqrt{C_K}}{\lambda}\right)^{2p},
\end{eqnarray*}
where we have used Cauchy-Schwarz's inequality to get the second inequality.\\ Using Stirling's formula, we infer that $c_{p}^{-2p}c_{2p}^{2p}\leq c$, for some constant $c>1$ and thus  for $ \lambda = 2(c \sqrt{C_K} +1)$, we obtain  $\sum_{p\geq 1}\mathbb{P}(||X^{H, K} ||_{L^p(I)} > \lambda c_p)< +\infty,$ which  in turn implies, by the Borel-Cantelli's lemma, that 
$$\mathbb{P}\left(\dfrac{||X^{H, K} ||_{L^p(I)}}{c_p} > \lambda \, \, \mbox{i.o.}\right)=0.$$
Therefore,  $\sup_{p \in \mathbb{N}^{\star}}\dfrac{1}{\sqrt{p} }||X^{H, K}||_{L^p(I)} <+\infty$ a.s., since $c_p \geq c \sqrt{p}$ for some  constant $c>1$.\\
For real $p\geq 1$, because $[p]+1 \leq 2p$, we have 
\[\dfrac{1}{\sqrt{p} }||X^{H, K}||_{L^p(I)} \leq  \dfrac{\sqrt{2}}{\sqrt{[p] +1 } }||X^{H, K}||_{L^{[p] +1}(I)} \leq \sup_{q\in \mathbb{N}^{\star}}\sqrt{2}\dfrac{1}{\sqrt{q} }||X^{H, K}||_{L^q(I)} < +\infty \quad \mbox{a.s.} \]
Consequently $\sup_{p\geq 1}\dfrac{1}{\sqrt{p} }||X^{H, K}||_{L^p(I)} <+\infty$ a.s. \\
Concerning the second assertion, we let for $s >0$, $0<h<1$ and $0\leq t \leq 1-h$,
\[Y_s:= \int_{0}^{+\infty} e^{-\theta s}\theta^{-\frac{K-1}{2}}dW_{\theta}\quad \mbox{and}\quad F_h(t):= \int_{t^{2H}}^{(t+h)^{2h}}Y_u du.\]
Clearly, we have  for $u, v >0$,
\[\E(Y_uY_v)= \int_{0}^{+\infty} e^{-\theta (u+v)}\theta^{1-K} d \theta= \Gamma(2-K)(u+v)^{K-2}.\]
Then
\begin{eqnarray} \label{eq:moments}
\E(|F_h(t)|^p)= c_p^p \var(F_h(t))^{p/2}=c_p^p\Gamma^{p/2}(2-K)\left(\int_{t^{2H}}^{(t+h)^{2h}} \int_{t^{2H}}^{(t+h)^{2h}} (u+v)^{K-2}du dv \right)^{p/2}.
\end{eqnarray}
We will need the following ancillary result whose proof is postponed until the proof of Theorem \ref{Lei-NualartProcess-result2} is finished.
\begin{lemme}\label{lem:1} There exists two positive constants $A_{H, K}$ and $B_{H, K}$ such that  for all $0<h<1/2$ and $t \in [0, 1-h]$ 
\begin{eqnarray} \label{claim:1}
\E(|F_h(t)|^p)\leq \begin{dcases}
	(A_{H, K} c_p)^p h^{HKp},&\quad  \mbox{if}\quad \,  t\in [0, h]; \\
	(B_{H, K} c_p)^p h^{p}t^{-(1-HK)p},&   \mbox{if}\quad \,  t\in [h, 1-h].
	\end{dcases}  
\end{eqnarray}
\end{lemme}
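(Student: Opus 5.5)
By \eqref{eq:moments}, the lemma reduces to a deterministic bound on the variance
\[
V_h(t):=\Gamma(2-K)\int_{t^{2H}}^{(t+h)^{2H}}\int_{t^{2H}}^{(t+h)^{2H}}(u+v)^{K-2}\,du\,dv .
\]
Since $\E(|F_h(t)|^p)=c_p^p V_h(t)^{p/2}$, it is enough to show
\[
V_h(t)\le A^2 h^{2HK}\quad\text{for } t\in[0,h],
\qquad
V_h(t)\le B^2 h^2t^{-2(1-HK)}\quad\text{for } t\in[h,1-h],
\]
and then take $p/2$ powers. (The upper limit written $(t+h)^{2h}$ in the display is read as $(t+h)^{2H}$.) Write $a=t^{2H}$ and $b=(t+h)^{2H}$. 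The integrand is positive, so the double integral can be handled by explicit integration or by elementary comparisons.

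\textbf{Case $t\in[0,h]$.} Here I compare with the square $[0,b]^2$. When $K\le 2$ the integrand $(u+v)^{K-2}$ is nonincreasing in each variable, so the integral over $[a,b]^2$ is at most the integral over $[0,b-a]^2$. Alternatively, I enlarge the domain to $[0,b]^2$ directly. Either way, scaling gives
\[
\int_0^b\int_0^b (u+v)^{K-2}\,du\,dv=b^K\int_0^1\int_0^1(x+y)^{K-2}\,dx\,dy .
\]
The last integral is finite because $K>0$; it equals $C_K/\Gamma(2-K)$ from the computation above. Since $t\le h$, we have $b\le (2h)^{2H}$, so $b^K\le 2^{2HK}h^{2HK}$. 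This gives $A_{H,K}^2=2^{2HK}C_K$. For $K=2$ the Gamma factor degenerates. I would treat it as the limiting case, or note that the statement of the lemma is stated for $K<2$ in the usage; either way the integrand is then $1$ and $V_h(t)$ is bounded by a multiple of $(b-a)^2\le b^2\le (2h)^{4H}=(2h)^{2HK}$.

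\textbf{Case $t\in[h,1-h]$.} Now $u,v\ge a$, and the two subcases behave differently.
\begin{itemize}
\item If $K\le 2$, then $(u+v)^{K-2}\le (2a)^{K-2}$.
\item If $K\ge2$, then $(u+v)^{K-2}\le (2b)^{K-2}$, with $b\le (2t)^{2H}$.
\end{itemize}
In both subcases $(u+v)^{K-2}\le c\,t^{2H(K-2)}$, using $t\le b^{1/2H}\le 2t$. By the mean value theorem,
\[
b-a=2H\xi^{2H-1}h\quad\text{for some }\xi\in[t,t+h]\subset[t,2t],
\]
so $b-a\le c\,h\,t^{2H-1}$. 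This holds for both signs of $2H-1$, because $\xi$ is comparable to $t$. Hence
\[
V_h(t)\le c\,t^{2H(K-2)}\,h^2t^{4H-2}=c\,h^2t^{2HK-2},
\]
which is exactly $B^2h^2t^{-2(1-HK)}$.

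I expect the only delicate points to be keeping track of the constants so that they are uniform in $t$ and $h$, and treating the range $1<K<2$ correctly. In that range the integrand $(u+v)^{K-2}$ is still decreasing but is integrable near the origin only because $K>0$. The comparability $t\le\xi\le 2t$, which relies on $h\le t$, is the key step making the second case work.
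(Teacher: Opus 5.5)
Your proposal is correct and is essentially the paper's proof. You reduce to the variance through the Gaussian moment identity, dominate the integral over $[t^{2H},(t+h)^{2H}]^2$ by the one over $[0,(t+h)^{2H}-t^{2H}]^2$ and rescale when $t\le h$, and for $t\ge h$ bound $(u+v)^{K-2}\le (2t^{2H})^{K-2}$ and use $(t+h)^{2H}-t^{2H}\le c\,h\,t^{2H-1}$ from the mean value theorem with $\xi\in[t,2t]$.

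One small correction to your aside on $K=2$: there $\Gamma(2-K)$ is infinite, since $\int_0^\infty e^{-\theta(u+v)}\theta^{1-K}\,d\theta$ diverges at $\theta=0$. So $F_h$ is not defined and no limiting bound by a multiple of $(b-a)^2$ is available; simply restrict to $K<2$, as the paper's proof also does.
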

Now taking $h_n=2^{-n}$ for $n \geq 1$ in Lemma \ref{lem:1}  and integrating (\ref{claim:1}) over $[0, 1-2^{-n}],$ we get 
\begin{eqnarray*} 
\E(Y^p_{n, p}) &=& 2^{npHK }\int_{0}^{1-2^{-n}} \E[ |F_{2^{-n}}(t)|^p] dt\notag\\
&\leq & (A_{H, K} c_p)^p 2^{-n}  + (B_{H, K} c_p)^p 2^{-np\lambda} \int_{2^{-n}}^{1} t^{- \lambda p} dt,
\end{eqnarray*}
with $\lambda:= 1-HK \in (0, 1)$.  Then
\begin{eqnarray*} 
\E(Y_{n, p}^p)&\leq&  \begin{dcases}
	(A_{H, K} c_p)^p 2^{-n} + (B_{H, K} c_p)^p 2^{-np\lambda} n\log(2) & ,  \mbox{if}\quad \,  \lambda p \leq 1;\nonumber \\
	(A_{H, K} c_p)^p 2^{-n} + (B_{H, K} c_p)^p \dfrac{2^{-n} }{\lambda p -1} & ,  \mbox{if}\quad \,  \lambda p > 1 \nonumber.\\
	\end{dcases} 
\end{eqnarray*}
Therefore 
\begin{eqnarray} \label{eq: estimation moments}
\E(Y_{n, p}^p)
\leq  (c_p C_{H, K})^p \eta_{n, p},
\end{eqnarray}
with $C_{H, K}:= A_{H, K} \vee B_{H, K} $ and 
\begin{eqnarray*} 
\eta_{n, p}&:=&  \begin{dcases}
	2^{-n}  +  2^{-np\lambda} n\log(2) & ,  \mbox{if}\quad \,  \lambda p \leq 1;\nonumber \\
	2^{-n} +  \dfrac{2^{-n} }{\lambda p -1} = \dfrac{\lambda p}{\lambda p -1}2^{-n}& ,  \mbox{if}\quad \,  \lambda p>1 \nonumber.\\
	\end{dcases} 
\end{eqnarray*}
Now, let  $p_0=[1/\lambda] + 1$ and observe that, on one hand, if $p\geq 1$ is such that $\lambda p >1$,  then 
\[\eta_{n, p}=\left(1+ \dfrac{1}{\lambda p -1}\right)2^{-n} \leq 2^{-n}\left(1+ \dfrac{1}{\lambda p_0 -1}\right)=:2^{-n} \delta_{H, K}.\]
Implying $\sum_{n}\eta_{n, p} \leq 2 \delta_{H, K}$. \\
\noindent On the other hand, if $ p \geq 1 $  is such that $\lambda p <1$, then  $$\eta_{n, p}=2^{-n} + \log(2)n2^{-\lambda np} \leq 2^{-n} + \log(2)n2^{-\lambda n} .
$$
Hence $\sum_{n}\eta_{n, p}\leq 2 + \log(2) \dfrac{2^{-\lambda}}{(1- 2^{-\lambda})^2}$. Therefore
\begin{eqnarray}\label{eq: est112}
\sup_{p \in \mathbb{N}^{\star}}\left( \sum_{n}\eta_{n, p}\right)  \leq 2 \delta_{H, K} + 2 + \log(2) \dfrac{2^{-\lambda}}{(1- 2^{-\lambda})^2}=:S_{H, K}.
\end{eqnarray}
Using the fact that $c_p \leq c \sqrt{p}$, for all $p \geq 1$ and by choosing $R >c C_{H, K}$, we get by applying Markov's inequality together with (\ref{eq: estimation moments})
\begin{eqnarray*} 
\mathbb{P}(Y_{n, p} >R \sqrt{p}) \leq  \dfrac{\E[Y_{n, p}^p]}{R^p p^{p/2}} \leq \left( \dfrac{cC_{H, K}}{R} \right)^p \eta_{n, p}.
\end{eqnarray*}
Taking account of (\ref{eq: est112}), we obtain 
\begin{eqnarray} \label{eq: integer}
\sum _{p=1}^{+\infty}\sum _{n=0}^{+\infty} \mathbb{P}(Y_{n, p} >R \sqrt{p})   \leq  \sum _{n=0}^{+\infty} \eta_{n, p} \sum _{p=1}^{+\infty}\left( \dfrac{cC_{H, K}}{R} \right)^p  \leq  \dfrac{R S_{H, K}}{R-cC_{H, K}} <\infty.
\end{eqnarray}
The double parameter Borel-Cantelli lemma  implies  that a.s.
\[ \sup_{p \in \mathbb{N}^{\star}} \dfrac{1}{\sqrt{p}} \sup_{n \in \mathbb{N } }Y_{n, p} <+\infty.
\]
Now for real $p\geq 1$, let  $C:=\sup_{m \in \mathbb{N}^{\star}} \dfrac{1}{\sqrt{m}} \sup_{n \in \mathbb{N } }Y_{n, m}.$ Then $C<\infty$ a.s.  and for all integer $n$, we have 
\[Y_{n, p} \leq \left( Y_{n, [p]+1}\right)^{\frac{p}{[p]+1}}\leq \left(C \sqrt{[p]+1}\right)^{\frac{p}{[p]+1}}.\]
Using the elementary inequality $a^{\alpha} \leq 1 +a$ valid for all $a\geq 0$ and $0< \alpha \leq 1$, we get
\[\dfrac{Y_{n, p} }{\sqrt{p}}\leq \dfrac{1}{\sqrt{p}} + C \sqrt{\dfrac{[p] +1}{p}} \leq 1  + C \sqrt{2}. \]
Therefore, almost surely
\[ \sup_{p \geq 1} \dfrac{1}{\sqrt{p}} \sup_{n \in \mathbb{N } }Y_{n, p} \leq 1  + C \sqrt{2} <+\infty.
\]
This finishes the proof of the second statement in (\ref{reg:bo}) and whence the proof of Theorem \ref{Lei-NualartProcess-result2}.
\noindent \textbf{Proof  of Lemma \ref{lem:1}}:  Let $0<h<1/2$ and assume first that $t\in [0, h]$.  In view of (\ref{eq:moments}) and since $K<2$, we have
\begin{eqnarray}\label{eq: varF}
\var(F_h(t))&\leq & \Gamma(2-K)\int_{0}^{(t+h)^{2h}-t^{2H}} \int_{0}^{(t+h)^{2h}-t^{2H}} (u+v)^{K-2} du dv \nonumber\\
&=&\Gamma(2-K)C^{\prime}_K ((t+h)^{2h}-t^{2H})^K,
\end{eqnarray}
where we have used the change of variables  $u=((t+h)^{2H}-t^{2H})x$ and  $v=((t+h)^{2H}-t^{2H})y$ to obtain the last inequality and the constant $C^{\prime}_K$ is given by
\begin{eqnarray*} 
C^{\prime}_K:= \int_{0}^{1}\int_{0}^{1} (x+y)^{K-2} dx dy = \begin{dcases}
	2 \log(2),&\quad  \mbox{if}\quad \,  K=1; \\
	\dfrac{2^K-2}{K(K-1)},&   \mbox{if}\quad \,   K\neq 1.
	\end{dcases}  
\end{eqnarray*}
Since $(t+h)^{2h}-t^{2H} \leq (2h)^{2H}$, recall that $t\leq h$, we deduce  from (\ref{eq: varF}) that
\[ \var(F_h(t))\leq 2^{2HK} \Gamma(2-K)C^{\prime}_K h^{2HK},\]
and thus by (\ref{eq:moments}) that
 \[
\E(|F_h(t)|^p) \leq \left(c_p 2^{HK} \sqrt{\Gamma(2-K)C^{\prime}_K}\right)^p h^{HKp}=:(c_pA_{H, K})^p  h^{HKp}.\]
Consider now the case  $t\in [h, 1- h]$. Since $x\mapsto x^{K-2}$ is a non-increasing function,  we get
\begin{eqnarray*}
\var(F_h(t))\leq \Gamma(2-K)2^{K-2}t^{2H(K-2)} ((t+h)^{2h}-t^{2H})^2.
\end{eqnarray*}
Using the elementary inequality $(t+h)^{2h}-t^{2H} \leq H2^{1 \vee 2H} t^{2H-1} h=:D_Ht^{2H-1}h$, valid since $h\leq t$, we deduce
\begin{eqnarray*}
\var(F_h(t))\leq  \Gamma(2-K)2^{K-2}t^{2H(K-2)} D_H^2 t^{2(2H-1)}= \Gamma(2-K)2^{K-2} D_H^2t^{-2(1-HK)}h^2,
\end{eqnarray*}
Consequently, in view of  (\ref{eq:moments}), we obtain
 \[
\E(|F_h(t)|^p) \leq \left(c_p 2^{\frac{K-1}{2}} D_H \sqrt{\Gamma(2-K)}\right)^p t^{-p(1-HK)}=:(c_pB_{H, K})^p  h^{p}t^{-p(1-HK)}.\]
This completes the proof of Lemma  \ref{lem:1}.
\vspace{.5cm}

Before formulating the statement of the Besov regularity of the sample paths of the bfBm, we briefly report the original proof in \cite{NB}, but observe the following three differences with respect to our result
\begin{itemize}
\item[(i)] The proof employed  in \cite{NB} relies  on the Ciesielski et al. \cite{CKR}'s isomorphism characterization of the Besov spaces $\mathcal{B}^{HK}_{p, \infty}([0, 1])$ in terms of  sequence spaces. In contrast, our proof has the advantage of being  more direct and show that the Besov-Orlicz regularity is naturally inherited from that of the fBm.
\item[(ii)] The statement made  in \cite{NB} is valid only  for $H\in(0, 1)$ and $K\in (0, 1)$, whereas our result  covers all parameters  $H\in(0, 1)$ and $K\in(0, 2]$ such that $HK\in (0, 1)$. Hence, extending the rang of parameters $H$ and $K$.
\item[(iii)]In addition, our approach can be applied to establish Besov-Orlicz regularity for a broader class of stochastic processes, including bfBm and sfBm.
\end{itemize} 

We now state our second main result as follows.
\begin{theorem}\label{bfbm-result} Let $H\in(0, 1)$, $K\in (0, 2]$ such that $HK\in (0, 1)$. Then, almost surely the paths of the bfBm $B^{H, K}$ belong  to the Besov-Orlicz space $\mathcal{B}^{HK}_{\Phi_2, \infty}(I)$.
\end{theorem}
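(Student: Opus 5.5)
The plan is to combine Theorem~\ref{Lei-NualartProcess-result2} with Proposition~\ref{prop1}. The argument splits according to the value of $K$, because the Lei--Nualart decomposition (\ref{LNdecompo}) and the Bardina--Es-Sebaiy representation (\ref{BE-ext}) put the bfBm on different sides of the identity in law.

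The one external ingredient is the known regularity of fBm: for any Hurst index $\alpha\in(0,1)$, almost all paths of $B^{\alpha}$ lie in $\mathcal{B}^{\alpha}_{\Phi_2,\infty}(I)$ (Ciesielski--Kerkyacharian--Roynette). I would take this as given, as the paper's introduction does. It can also be reproved exactly as in the proof of Theorem~\ref{Lei-NualartProcess-result2}:
\begin{itemize}
\item the increment $B^{\alpha}_{t+h}-B^{\alpha}_t$ is Gaussian with variance $h^{2\alpha}$, so $\E|B^{\alpha}_{t+h}-B^{\alpha}_t|^p=c_p^p h^{\alpha p}$;
\item this bound is uniform in $t$, so the Markov inequality and double Borel--Cantelli argument apply with $\eta_{n,p}$ replaced by a summable sequence in $n$;
\item strictly speaking one needs $\sum_n \mathbb{P}(Y_{n,p}>R\sqrt p)<\infty$, which follows from using a slightly larger moment, as is standard.
\end{itemize}
Multiplying by a constant clearly preserves membership in $\mathcal{B}^{HK}_{\Phi_2,\infty}(I)$.

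\emph{Case $K\in(0,1)$.} By (\ref{LNdecompo}) we have $c_2B^{HK}\overset{d}{=}B^{H,K}+c_1X^{H,K}$. Apply Proposition~\ref{prop1} with $Z=c_2B^{HK}$, $X=B^{H,K}$ and $Y=c_1X^{H,K}$. Theorem~\ref{Lei-NualartProcess-result2} gives $Y\in\mathcal{B}^{HK}_{\Phi_2,\infty}(I)$ a.s. The fBm result gives $Z\in\mathcal{B}^{HK}_{\Phi_2,\infty}(I)$ a.s. Proposition~\ref{prop1} therefore yields the claim for $B^{H,K}$. Here $X$ and $Y$ need not be independent: the decomposition is in law, and the proposition only uses the linear-space structure of the Besov--Orlicz space together with the fact that path-space membership is a property of the law.

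\emph{Case $K=1$.} Here $B^{H,1}=B^H$, and the fBm result applies directly.

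\emph{Case $K\in(1,2)$ with $HK<1$.} By (\ref{BE-ext}) the bfBm has the law of $aB^{HK}+bX^{H,K}$. The two summands are each a.s. in $\mathcal{B}^{HK}_{\Phi_2,\infty}(I)$, by the fBm result and by Theorem~\ref{Lei-NualartProcess-result2} respectively. Since the space is linear, the sum lies in it a.s., and this transfers to $B^{H,K}$ because the event is determined by the law. A measurability remark is needed at this point: $\{f:\|f\|_{\mathcal{B}^{HK}_{\Phi_2,\infty}}<\infty\}$ is a measurable subset of path space, since $p$ and $n$ can be restricted to countable sets as in the proof above.

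\emph{Case $K=2$.} This is included in the statement but degenerate, because $\Gamma(2-K)$ blows up. I would handle it directly. The covariance is $R^{H,2}(t,s)=\tfrac14\big((t^{2H}+s^{2H})^2-|t-s|^{4H}\big)$ with $2H<1$. Computing $\E|B_{t+h}-B_t|^2\le Ch^{4H}$ on $[0,1]$ from this formula gives Gaussian moment bounds uniform in $t$, and the fBm-type argument then applies.

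The main obstacle I expect is precisely this boundary case $K=2$ together with the measurability and law-transfer subtleties. The cases $K<2$ are immediate from the results already established.
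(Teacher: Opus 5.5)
For $K\in(0,2)$ your argument is the paper's proof. That is: Lei--Nualart plus Proposition~\ref{prop1} for $K<1$, the fBm itself for $K=1$, and the Bardina--Es-Sebaiy representation for $1<K<2$, with the fBm result of Ciesielski et al.\ taken as given. You are also right that $K=2$ falls outside this route. The paper's proof treats only $K\neq 2$ without comment, and $X^{H,2}$ is not even defined because $C_K=\infty$ at $K=2$.

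\textbf{The gap at $K=2$.} A uniform bound $\E|B_{t+h}-B_t|^2\le Ch^{4H}$ does not give membership in $\mathcal{B}^{2H}_{\Phi_2,\infty}(I)$. At the endpoint exponent with $q=\infty$ you must control $\sup_n$, and marginal moments give no decay in $n$.
\begin{itemize}
\item In your Markov/Borel--Cantelli scheme, for fBm one has $\E(Y_{n,p}^p)=c_p^p(1-2^{-n})$, so $\eta_{n,p}$ is not summable in $n$.
\item A higher moment $q>p$ does not help: it only yields $\E(Y_{n,p}^q)\le \E(Y_{n,q}^q)\le c_q^q$, again constant in $n$.
\item In Theorem~\ref{Lei-NualartProcess-result2} summability came from the smoothness of $X^{H,K}$ away from $0$, which fBm lacks.
\end{itemize}
No argument using only such bounds can work. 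Take $X_t=\sum_{k\geq 1}2^{-k\alpha}\xi_k\sin(\pi 2^k t)$ with $\xi_k$ i.i.d.\ $N(0,1)$. It satisfies $\E|X_{t+h}-X_t|^2\le Ch^{2\alpha}$. However, in $X(\cdot+2^{-n})-X(\cdot)$ the terms with $k>n$ vanish, and those with $k<n$ are almost orthogonal to $\sin(\pi 2^n\cdot)$. Hence $2^{n\alpha}\|X(\cdot+2^{-n})-X(\cdot)\|_{L^2(I(2^{-n}))}\geq |\xi_n|-o(1)$, which is a.s.\ unbounded. The same objection invalidates your proposed self-contained reproof of the fBm result; that endpoint statement is exactly what Ciesielski et al.\ supply.

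\textbf{The repair.} It stays within the paper's method. Since $HK=2H<1$,
\[
R^{H,2}(t,s)=\tfrac12\cdot\tfrac12\big(t^{4H}+s^{4H}-|t-s|^{4H}\big)+\tfrac12\,t^{2H}s^{2H},
\]
so $B^{H,2}\overset{d}{=}2^{-1/2}\big(B^{2H}_t+\xi\, t^{2H}\big)$, with $\xi\sim N(0,1)$ independent of an fBm $B^{2H}$.
\begin{itemize}
\item The fBm term lies in $\mathcal{B}^{2H}_{\Phi_2,\infty}(I)$ a.s.\ by Ciesielski et al.
\item The function $t\mapsto t^{2H}$ lies there too, because $0\le (t+h)^{2H}-t^{2H}\le h^{2H}$ and $\|\cdot\|_{L^{\Phi_2}(I)}\le \|\cdot\|_{\infty}$.
\end{itemize}
Linearity of the space then concludes.
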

\textit{Proof:} The proof of the Theorem \ref{bfbm-result} will be essentially based on Proposition \ref{prop1}. The situation when $K=1$ corresponds to the fBm process already considered in \cite{CKR}. Hence we only consider two cases. 
\begin{itemize}
\item[i)] Case $0<K<1$.  In view of (\ref{LNdecompo}), we have 
\begin{eqnarray}\label{LNdecompo1}
\{\; c_2 B^{HK}_t, \;\; t\in I  \; \}\overset{d}{=}\{\; B^{H, K}_t+c_1X_t^{H, K}, \;\; t\in I \; \},
\end{eqnarray}
where the process $X^{H, K}$  is given by (\ref{LNprocess1}). 
Moreover, according to Ciesielski et al. \cite{CKR}, it is well known that
$$\mathbb{P}(B^{HK}_{\cdot} \in \mathcal{B}^{HK}_{\Phi_2, \infty}(I))=1. $$ 
Consequently, by Proposition \ref{prop1}, we deduce that $ \mathbb{P}(B^{H, K} \in \mathcal{B}^{HK}_{\Phi_2, \infty}(I))=1$ since 
 $ \mathbb{P}(X^{H, K} \in \mathcal{B}^{HK}_{\Phi_2, \infty}(I))=1$ by Theorem \ref{Lei-NualartProcess-result2}. 
\item[ii)] Case $1<K<2$. Using (\ref{BE-ext}) instead of (\ref{LNdecompo}) and arguing as before, the conclusion follows immediately from Proposition \ref{prop1} and Theorem \ref{Lei-NualartProcess-result2}.
\end{itemize}
The proof of Theorem \ref{bfbm-result} is finished.
\vspace{0.2cm}

The following result was stated without proof in \cite{NB}, where the authors claimed that following the same line of calculations using  Ciesielski et al's \cite{CKR} isomorphism characterization as in the bfBm case, analogous conclusions could be obtained  for the sfBm. Here, we give a more general statement and  offer  a direct and  simple proof relying on the well-known Besov-Orlicz regularity result for the trajectories of the fBm together with Proposition \ref{prop1}.

\begin{theorem} \label{sbfBm1}Let $H\in(0, 1)$. Then the paths of the sfBm $S^{H}$ belong  almost surely to the Besov-Orlicz space $\mathcal{B}^{HK}_{\Phi_2, \infty}(I)$.
\end{theorem}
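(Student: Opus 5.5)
The plan is to prove first that almost surely $S^H\in\mathcal{B}^{H}_{\Phi_2,\infty}(I)$, and then obtain the exponent $HK$ from the embedding (\ref{BO-embeding}). Here $K$ is read as in the preceding results, with $K\in(0,1]$, so that $HK\leq H$. This restriction is essential for the last step, since (\ref{BO-embeding}) only allows the exponent to be lowered. The argument combines Proposition \ref{prop1}, the decompositions (\ref{subdecomp1}) and (\ref{RT-ext}), the result of Ciesielski et al.\ \cite{CKR} that $\mathbb{P}(B^{H}\in\mathcal{B}^{H}_{\Phi_2,\infty}(I))=1$, and Theorem \ref{Lei-NualartProcess-result2}.

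The first step is to identify the auxiliary process. By (\ref{LNprocess1}) with Hurst index $\tfrac12$, we have $X^{\frac12,K'}_t=X^{K'}_{t}$. Hence the process $X^{2H}$ in (\ref{subdecomp1})--(\ref{RT-ext}) is exactly $X^{\frac12,2H}$.

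\begin{itemize}
\item[$\bullet$] If $H\neq\tfrac12$, the parameters $(\tfrac12,2H)$ satisfy $2H\in(0,2)$ and $\tfrac12\cdot 2H=H\in(0,1)$. Theorem \ref{Lei-NualartProcess-result2} then gives that almost surely the paths of $X^{2H}$ belong to $\mathcal{B}^{H}_{\Phi_2,\infty}(I)$. The same holds for $c_3X^{2H}$ and $c_4X^{2H}$, since these spaces are linear.
\item[$\bullet$] If $H=\tfrac12$, then $S^{1/2}$ is a standard Brownian motion, i.e.\ $B^{1/2}$. The claim with exponent $\tfrac12$ is then already the result of \cite{CKR}, and no decomposition is needed.
\end{itemize}

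Next I split according to $H$ and apply Proposition \ref{prop1}, paying attention to which process plays the role of $Z$.

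\begin{itemize}
\item[$\bullet$] For $H\in(0,\tfrac12)$, (\ref{subdecomp1}) reads $Z:=S^H\overset{d}{=}X+Y$ with $X:=B^H$ and $Y:=c_3X^{2H}$. Since $Y$ is regular and $X$ is regular by \cite{CKR}, Proposition \ref{prop1} gives that almost surely the paths of $S^H$ lie in $\mathcal{B}^{H}_{\Phi_2,\infty}(I)$.
\item[$\bullet$] For $H\in(\tfrac12,1)$, (\ref{RT-ext}) reads $Z:=B^H\overset{d}{=}X+Y$ with $X:=S^H$ and $Y:=c_4X^{2H}$. Here $Z$ is regular by \cite{CKR}, so the converse direction of Proposition \ref{prop1} yields the regularity of $X=S^H$.
\end{itemize}

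Finally, since $HK\leq H$, the embedding (\ref{BO-embeding}) gives $\mathcal{B}^{H}_{\Phi_2,\infty}(I)\subseteq\mathcal{B}^{HK}_{\Phi_2,\infty}(I)$, which is the statement.

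The main point needing care is the application of Proposition \ref{prop1}. It requires the auxiliary process to be regular path by path, whereas the decomposition is only an equality in law. One must check that membership in $\mathcal{B}^{H}_{\Phi_2,\infty}(I)$ is an event determined by the law of the process. This is the case for continuous (or separable measurable) versions, because the norm (\ref{norm-OB_2}) is a measurable functional of the path, defined through countably many $L^p$-norms. This measurability is exactly the content hidden in the "straightforward" proof of Proposition \ref{prop1}, and I would state it explicitly.

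A second, minor point is to confirm that $c_3$ and $c_4$ are finite real constants, and that the independence of $W$ from $B^H$ (respectively from $S^H$) is as required in the decompositions. Both are guaranteed by \cite{RuizT}.
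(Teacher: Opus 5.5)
Your proposal is correct and follows the paper's own argument. It uses the decompositions (\ref{subdecomp1}) and (\ref{RT-ext}), Theorem~\ref{Lei-NualartProcess-result2} for the auxiliary process, the fBm result of \cite{CKR}, and Proposition~\ref{prop1} with $Z=S^H$ for $H<\frac12$ and $Z=B^H$ for $H>\frac12$; your identification $X^{2H}=X^{\frac12,2H}$, the explicit case $H=\frac12$, and the treatment of the stray $K$ via (\ref{BO-embeding}) also correct small slips in the paper, which writes $X^{2H,\frac12}$ (indices swapped, so for $H>\frac12$ outside the hypotheses of Theorem~\ref{Lei-NualartProcess-result2}).
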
 
\textit{Proof:} The argument follows the same line as in the bifractional Brownian motion case and relies on the additive decomposition in law (\ref{subdecomp1}), (\ref{RT-ext}) together with Proposition \ref{prop1}. We distinguish two cases.
\begin{itemize}
\item[i)]Case $0<H<1/2$. From (\ref{subdecomp1}), we have the following decomposition in law
\begin{eqnarray*}
\{S^H_t, \;\; t\geq 0 \; \}&\overset{d}{=}&\{\; B^{H}_t+c_3X_t^{2H}, \quad t\geq 0 \; \}\\
&=&\{\; B^{H}_t+c_3X_t^{2H, 1/2}, \quad t\geq 0 \; \}.
\end{eqnarray*}
As before $\mathbb{P}(B^{H}_{\cdot} \in \mathcal{B}^{H}_{\Phi_2, \infty}(I))=1. $ 
Consequently, by Proposition \ref{prop1}, we deduce that $$ \mathbb{P}(S^{H} \in \mathcal{B}^{H}_{\Phi_2, \infty}(I))=1,$$
 since 
 $ \mathbb{P}(X^{2H, 1/2} \in \mathcal{B}^{H}_{\Phi_2, \infty}(I))=1$ by Theorem \ref{Lei-NualartProcess-result2}. 
 \item[ii)]Case $1/2<H<1$. From (\ref{RT-ext}), we have the following decomposition in law
\begin{eqnarray*}
\{B^{H}_t, \;\; t\geq 0 \; \}&\overset{d}{=}&\{\; S^H_t+c_4X_t^{2H}, t\geq 0 \; \},\\
&=&\{\; S^{H}_t+c_4X_t^{2H, 1/2}, \quad t\geq 0 \; \}.
\end{eqnarray*}
The result follows again in this case by using  Proposition \ref{prop1}, and the fact that  $ \mathbb{P}(X^{2H, 1/2} \in \mathcal{B}^{H}_{\Phi_2, \infty}(I))=1$. 
\end{itemize}
This finishes the proof of Theorem \ref{sbfBm1}.\\

Following the same line of thoughts, we now state an analogue  result for the processes $G$.
%This type of argument may be applied to other situation. Here we %present 
\begin{theorem}\label{G} Let  $1/2<H<1$ and $\gamma\in(0, 2H)$. Let $G$ be the centered Gaussian process with the covariance function given by (\ref{}). Then, almost all the paths of  $G$ belong to the Besov-Orlicz space $\mathcal{B}^{H-\gamma/2}_{\Phi_2, \infty}(I)$. 
\end{theorem}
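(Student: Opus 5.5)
We follow the same scheme as for Theorems \ref{bfbm-result} and \ref{sbfBm1}: we use the Harnett--Nualart decomposition in law (\ref{HNdecompo}) together with Proposition \ref{prop1}. Put $\alpha=2H-\gamma\in(0,2)$, so the target regularity index is $\alpha/2=H-\gamma/2\in(0,1)$. By (\ref{HNdecompo}),
\[
\{G_t,\ t\in I\}\overset{d}{=}\{\sqrt{\kappa}\,B^{\alpha/2}_t+\sqrt{\lambda}\,X^{2\alpha+1}_t,\ t\in I\},
\]
with $B^{\alpha/2}$ independent of $W$. We apply Proposition \ref{prop1} with $Z=G$, $X=\sqrt{\kappa}B^{\alpha/2}$ and $Y=\sqrt{\lambda}X^{2\alpha+1}$. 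The fBm part is handled by Ciesielski et al. \cite{CKR}: almost surely $B^{\alpha/2}\in\mathcal{B}^{\alpha/2}_{\Phi_2,\infty}(I)$. Multiplication by the constant $\sqrt{\kappa}$ does not affect membership, since these are linear spaces. It then remains to show that almost surely the paths of the Lei--Nualart-type component lie in $\mathcal{B}^{\alpha/2}_{\Phi_2,\infty}(I)$.

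For this we identify the component with a process $X^{H',K'}$ from (\ref{LNprocess1}), in the same way $X^{2H}$ was rewritten as $X^{2H,1/2}$ in the sfBm case. Since $X^{H',K'}_t=X^{K'}_{t^{2H'}}$, taking $H'=1/2$ gives $X^{1/2,K'}_t=X^{K'}_t$. The regularity exponent of $X^{1/2,K'}$ in Theorem \ref{Lei-NualartProcess-result2} is $H'K'=K'/2$. We must check that the index attached to the Lei--Nualart process in (\ref{HNdecompo}) is $K'=\alpha$, so that $H'K'=\alpha/2$ matches the fBm part. The check is a variance comparison. By (\ref{moment p:1}), $\var(X^{K'}_t)=C_{K'}t^{K'}$. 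Matching the self-similarity of $G$, whose exponent is $\alpha/2$ because $R(ct,cs)=c^{2H-\gamma}R(t,s)$, forces $K'=\alpha$. The label ``$2\alpha+1$'' in (\ref{HNdecompo}) should therefore be read as the parametrisation of \cite{HN} for the exponent $\theta^{-(1+\alpha)/2}$ in (\ref{LNprocess}). This bookkeeping step is the main point requiring care. If the kernel turned out to be $\theta^{-(1+K')/2}$ with some other $K'$, the self-similarity index $K'/2$ would contradict that of $G$, so $K'=\alpha$ is the only consistent choice.

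With $H'=1/2$ and $K'=\alpha=2H-\gamma$, the hypotheses of Theorem \ref{Lei-NualartProcess-result2} hold. We have $H'\in(0,1)$, and $K'\in(0,2)$ because $0<\gamma<2H<2$, so $0<2H-\gamma<2$. Also $H'K'=\alpha/2\in(0,1)$. Hence almost surely $X^{1/2,\alpha}\in\mathcal{B}^{\alpha/2}_{\Phi_2,\infty}(I)$, and so does $\sqrt{\lambda}X^{1/2,\alpha}$.

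Proposition \ref{prop1} then yields that almost surely the paths of $G$ lie in $\mathcal{B}^{H-\gamma/2}_{\Phi_2,\infty}(I)$. The assumption $H>1/2$ is not used beyond guaranteeing that the decomposition (\ref{HNdecompo}) and the heat-equation interpretation are available.
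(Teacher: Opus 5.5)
Your proposal is correct. It shares the paper's skeleton: the decomposition (\ref{HNdecompo}), Proposition~\ref{prop1}, Theorem~\ref{Lei-NualartProcess-result2} for the auxiliary part, and the Ciesielski et al.\ result for the fBm part. Where it differs is in how you treat the auxiliary component.

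The paper takes the label in (\ref{HNdecompo}) at face value. It writes the component as $X^{4H-2\gamma+1,1/2}$ and uses Theorem~\ref{Lei-NualartProcess-result2} to place it in $\mathcal{B}^{2H-\gamma+1/2}_{\Phi_2,\infty}(I)$. It then descends to the index $H-\gamma/2$ via the embedding (\ref{BO-embeding}). You instead fix the parameter by your variance/self-similarity check. This identifies the component as $X^{1/2,\,2H-\gamma}$, whose index is exactly $H-\gamma/2$, so no embedding is needed.

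What your route buys is validity on the full parameter range. With $\alpha=2H-\gamma$, your values $H'=1/2$, $K'=\alpha\in(0,2)$ and $H'K'\in(0,1)$ are precisely the hypotheses of Theorem~\ref{Lei-NualartProcess-result2}. The literal reading fails these hypotheses as soon as $\alpha\geq 1/2$. In that case $4H-2\gamma+1\geq 2$, and $\int_0^\infty(1-e^{-\theta t})^2\theta^{-2-2\alpha}\,d\theta$ diverges at $0$, so the process in (\ref{LNprocess}) is not even defined. The claimed index $2H-\gamma+1/2$ is then also $\geq 1$, outside the range set up in Section~\ref{sec2}.

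Even for $\alpha<1/2$, the literal reading would give $\var(G_t)=\kappa t^{\alpha}+\lambda C_{2\alpha+1}t^{2\alpha+1}$. This contradicts the $(H-\gamma/2)$-self-similarity of $G$, which is exactly the inconsistency your check catches. So the paper's embedding step is only needed because of the misread label, and your argument is the sound form of the paper's proof.

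Your superscript order $X^{1/2,K'}$ is also the one consistent with (\ref{LNprocess1}). The paper's notation $X^{\cdot,1/2}$, used here and in Theorem~\ref{sbfBm1}, has the two parameters swapped. That is harmless only because the index depends on their product.
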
 
\textit{Proof:}  Notice first that by (\ref{HNdecompo}) the law of $G$ can be split into the sum  
\begin{eqnarray}\label{hn-decomp}
\{\; G_t, t\geq 0 \; \}\overset{d}{=}\{\; \sqrt{\kappa} B^{H- \gamma/2}_t+\sqrt{\lambda} X_t^{4H-2 \gamma+1, 1/2}, \quad t\geq 0 \; \}.
\end{eqnarray}
From Theorem \ref{Lei-NualartProcess-result2} we obtain $$\mathbb{P}( X_{\cdot}^{4H-2 \gamma+1, 1/2} \in \mathcal{B}^{2H- \gamma+1/2}_{\Phi_2, \infty}(I))=1.$$
Since $2H -\gamma +1/2 > H-\gamma/2$, the conclusion follows from (\ref{hn-decomp}), the embedding (\ref{BO-embeding}) and the classical result for the fractional Brownian motion.

\end{document}